\DeclareMathOperator{\E}{E}
\newcommand{\bbN}{\mathbb N}
\newcommand{\bbC}{\mathbb C}
\newcommand{\bbR}{\mathbb R}
\newcommand{\bbM}{\mathbb M}
\newcommand{\HS}{\textrm{HS}}
\newcommand{\OP}{\textrm{OP}}
\newcommand{\cS}{\mathcal S}
\newcommand{\cU}{\mathcal U}
\newcommand{\cV}{\mathcal V}
\newtheorem{theorem}{Theorem}[section]
\newtheorem*{theorem*}{Theorem}
\newtheorem{question}[theorem]{Question}
\newtheorem*{proposition*}{Proposition}
\newtheorem{lemma}[theorem]{Lemma}
\newtheorem*{lemma*}{Lemma}
\newtheorem*{corollary*}{Corollary}
\newcommand{\mycircle}[3]{%
	\begin{tikzpicture}
		\def\radius{#1}
		\def\angles{9,80,80+#3,230-#3,230}
		\def\gap{2.5}
		\draw [color=blue]({\radius*cos(9+\gap)}, {\radius*sin(9+\gap)}) arc (9+\gap:80-\gap:\radius);
		\draw [color=red] ({\radius*cos(80+\gap)}, {\radius*sin(80+\gap)}) arc (80+\gap:80+#3-\gap:\radius);
		\draw [color=blue]({\radius*cos(80+#3+\gap)}, {\radius*sin(80+#3+\gap)}) arc (80+#3+\gap:230-\gap:\radius)  ;
		\draw [color=red] ({\radius*cos(230-#3-#3+\gap)}, {\radius*sin(230-#3-#3+\gap)}) arc (230-#3-#3+\gap:360+9-\gap:\radius);
		\draw [color=blue]({\radius*cos(230+\gap)}, {\radius*sin(230+\gap)}) arc (230+\gap:360+9-\gap:\radius);
		\node at ({\radius*cos(9)}, {\radius*sin(9)}) {#2$_1$};
		\node at ({\radius*cos(80)}, {\radius*sin(80)}) {#2$_3$};
		\node at ({\radius*cos(80+#3)}, {\radius*sin(80+#3)}) {#2$_2$};
		\node at ({\radius*cos(230-#3-#3)}, {\radius*sin(230-#3-#3)}) {#2$_5$};
		\node at ({\radius*cos(230)}, {\radius*sin(230)}) {#2$_4$};
\end{tikzpicture}}
\title{Games in the matrix: Director's cut}
\author{Ilijas Farah}
\date{\today}
\address{Department of Mathematics and Statistics\\
	York University\\
	4700 Keele Street\\
	North York, Ontario\\ Canada, M3J 1P3\\
	and 
	Ma\-te\-ma\-ti\-\v cki Institut SANU\\
	Kneza Mihaila 36\\
	11\,000 Beograd, p.p. 367\\
	Serbia}
\email{ifarah@yorku.ca}
\urladdr{https://ifarah.mathstats.yorku.ca}
\thanks{Partially supported by NSERC}
\thanks{This is an expanded version of \cite{farah2026games}. While movie directors routinely complain that the studio butchered their masterpiece, I am indebted to Victor Bloch, whose adaptation of this article—including the addition of exercises—for \emph{Snapshots of modern mathematics from Oberwolfach} greatly improved the original. The present version includes further details as well as the exercises kindly provided by Victor.}
\begin{document}
	
		\begin{abstract}
		As $n\to \infty$, do the algebras of $n\times n$ complex matrices look alike? Nobody knows, but let's play a game and see how far we get! 
	\end{abstract}
	\maketitle

	\section{Games}
	
	Let's start with the simplest interesting mathematical structures. 
	A \emph{simple graph} is a set of vertices some of which are connected by an `edge' (the technical term is `adjacent'). What makes a graph simple is that only distinct nodes can be adjacent and that we do not allow multiple edges between any two vertices. We write $G=(V,E)$, where~$V$ is the set of vertices and $E$ is the set of connections, called edges. If $V$ has $m$ elements, then there are exactly $ m\choose  2$ ways to choose the edges, because for every pair of distinct vertices we have two choices, whether to connect them with an edge or not. 
	To compare two graphs, we will play a game. 
	Given graphs $G_1=(V_1,E_1)$ and $G_2=(V_2,E_2)$ and $n\geq 1$, the game $\Gamma(G_1,G_2,n)$ between two players, Challenger and Duplicator is played in~$n$ innings and has the following rules.   In each inning Challenger chooses a vertex in either one of the graphs. Duplicator responds  by choosing a vertex the other graph. Thus in the $j$-th inning a vertex $a_j$ in $V_1$ and a vertex $b_j$ in~$V_2$ are chosen. We add the (unnecessary)  requirement that vertices cannot be repeated; the first player who repeats  a vertex loses the game.  After $n$ innings,  Duplicator wins if all $i$ and $j$ satisfy $a_i \E_1 a_j$ if and only if $b_i \E_2 b_j$ (another way to say this is that the map $a_i\mapsto b_i$, for $i\leq n$, is an \emph{isomorphism between the induced subgraphs}); otherwise Challenger wins. 
	
	Let's get some easy observations out of the way. 
	If there is an isomorphism~$\Phi$ between the graphs, then Duplicator can win the game by applying $\Phi$ or its inverse to the vertices provided by the Challenger. If both graphs have $n$ vertices,  then a game in which Duplicator wins produces an isomorphism between the two graphs.
	
	Each game considered in this note is a finite game of perfect information, meaning that each player has complete knowledge of the history of the game up to their current turn.  For example, both chess and go are finite games of perfect information, but in each of these games draw is a possible outcome. Since draw is not a possible outcome in any of the games considered in this note, being a finite game of perfect information implies that one of the players has a winning strategy.\footnote{The question of the existence of a strategy for one of the players (a property called determinacy) for infinite games is the first in a long list of fascinating topics related to our subject that we regrettably do not have time to discuss.} 
	A \emph{winning strategy} in $\Gamma(G_1,G_2,n)$ for Duplicator is a function from positions in the game in which it is Duplicator's turn whose values are valid moves in the game, and such that by following this strategy Duplicator wins the game regardless of how Challenger plays. A winning strategy for Challenger is defined analogously. If each of the players had a winning strategy, then pitting these strategies against each other would result in a game in which both players win; contradiction. An example of a winning strategy is provided in the proof of Theorem~\ref{T.EF-graph} below.
	In chess and go there are three possible outcomes: (i--ii) One  of the players has a winning strategy,  or (iii) each of the players has a strategy that assures that they do not lose the game.

	%
	%
	%
	%
	%
	
	We move on to a (somewhat) nontrivial example of a winning strategy for Duplicator. For $m\geq 3$ let~$C_m$ be the \emph{cycle graph}, whose vertices are $\{1,\dots, m\}$ and two vertices~$i,j$ are adjacent if and only if $|i-j|=1$ or $\{i,j\}=\{1,m\}$. 
	
	\begin{theorem}\label{T.EF-graph}  If $\min (m,k)>2^{n+1}$, then Duplicator wins  $\Gamma(C_m,C_k,n)$. 
	\end{theorem}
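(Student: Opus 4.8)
The plan is to use an Ehrenfeld-Fraïssé style argument, proving that Duplicator can maintain a certain ``distance-respecting'' invariant through all $n$ innings. The key intuition is that in a long cycle, the only thing Challenger can detect in $n$ moves is the relative distances between chosen vertices (and whether they coincide), but distances can only be measured up to a threshold that halves with each remaining inning.

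\section{Proof sketch}

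The plan is to prove a strengthened statement by (downward) induction on the number of innings, following the standard Ehrenfeld--Fra\"iss\'e strategy. For vertices $i,j$ in the cycle $C_m$, write $d_m(i,j)$ for the graph distance, i.e.\ the length of the shorter of the two arcs joining $i$ and $j$ along the cycle (so $d_m(i,j)\leq \lfloor m/2\rfloor$). Two vertices are adjacent exactly when their distance is~$1$. The central idea is that after $j$ innings have been played, with $n-j$ remaining, Duplicator should maintain the invariant that for every pair of already-chosen indices $s,t\leq j$, either $d_m(a_s,a_t)=d_k(b_s,b_t)$, or else both distances exceed $2^{n-j}$. Call a partial position satisfying this the \emph{$(n-j)$-good} position. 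Since $2^{n-0}=2^n<\min(m,k)$ by hypothesis (indeed we assume the stronger $2^{n+1}<\min(m,k)$, giving room to spare), distances that are ``large'' are genuinely indistinguishable, and at the end ($j=n$, threshold $2^{0}=1$) a $1$-good position forces $d_m(a_s,a_t)=d_k(b_s,b_t)$ whenever either is $\leq 1$; in particular adjacency is preserved in both directions, so Duplicator wins.

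The heart of the argument is the inductive step: assuming the current position after $j$ innings is $(n-j)$-good, I would show Duplicator can respond to any Challenger move so that the resulting position after $j+1$ innings is $(n-j-1)$-good. By symmetry suppose Challenger plays a new vertex $a_{j+1}$ in $C_m$. Duplicator considers the distance from $a_{j+1}$ to each previously chosen $a_s$. If $a_{j+1}$ is ``close'' (within $2^{n-j-1}$) to some already-played $a_s$, Duplicator must copy that distance exactly on the $C_k$ side; the key geometric fact is that because the position was $(n-j)$-good, $a_{j+1}$ can be close to at most the cluster of previously played vertices that are mutually within $2^{n-j}$, and the matching $b_s$ vertices inherit exactly the same mutual distances, so a vertex $b_{j+1}$ replicating the local configuration exists. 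If $a_{j+1}$ is ``far'' (more than $2^{n-j-1}$) from all previous $a_s$, Duplicator needs a $b_{j+1}$ that is more than $2^{n-j-1}$ from all previous $b_s$; such a vertex exists because the previously chosen $b_s$ can block at most $k$ intervals of radius $2^{n-j-1}$, occupying fewer than $j\cdot(2\cdot 2^{n-j-1}+1)$ vertices, which is strictly less than $k$ given the bound $k>2^{n+1}$.

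The main obstacle I anticipate is the bookkeeping in the ``close'' case: one must verify that copying distances is globally consistent, i.e.\ that when $a_{j+1}$ is simultaneously close to several previous vertices, those vertices all lie on a short arc and Duplicator can place $b_{j+1}$ on the corresponding short arc on the other side without contradicting any already-fixed distance. Here the halving of the threshold is exactly what makes consistency automatic: if $a_{j+1}$ is within $2^{n-j-1}$ of two previous points $a_s,a_t$, then $d_m(a_s,a_t)\leq 2^{n-j}$, so by $(n-j)$-goodness this distance was matched exactly, and the local picture transfers faithfully. I would also need to handle the trivial coincidence case (distance $0$) to respect the no-repetition rule, and to check the counting in the ``far'' case leaves enough room, which is where the factor-of-two slack in the hypothesis $\min(m,k)>2^{n+1}$ rather than $2^n$ is convenient. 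These verifications are routine once the invariant and the halving mechanism are set up correctly.
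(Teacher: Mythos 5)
Your plan is the same as the paper's: maintain the invariant that after $j$ innings every pair of chosen vertices has either exactly equal distances on the two cycles or distances exceeding a threshold $2^{n-j}$ that halves each inning, then win because at threshold $1$ adjacency must transfer. The endgame analysis, the ``far'' case counting, and the intra-cluster consistency observation are all fine. But there is a genuine gap in the ``close'' case, and it sits precisely in what you dismiss as routine. Your halving argument (two previous vertices within $2^{n-j-1}$ of $a_{j+1}$ are within $2^{n-j}$ of each other, hence at matched distance) constrains only the \emph{cluster} of previous vertices near $a_{j+1}$; it says nothing about previous vertices at matched distance from the cluster that lie \emph{outside} it, and those are what make the placement of $b_{j+1}$ ambiguous. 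When the cluster is a single vertex $a_s$ (the typical case), there are two positions for $b_{j+1}$ ``replicating the local configuration'', one on each side of $b_s$, and one of them can lose the game outright. Concretely, take $n=4$ and $m,k>32$, and let Challenger play positions $0$, $3$, $5$ on $C_m$. Your strategy answers $b_1=0$, then $b_2=3$ (distance $3\leq 2^{n-2}$, copied exactly). At the third inning, $a_3=5$ is close only to $a_2$ (distance $2\leq 2^{n-3}$), and both $b_3=5$ and $b_3=1$ replicate the singleton-cluster configuration; but the choice $b_3=1$ gives $d(b_3,b_1)=1$ while $d(a_3,a_1)=5$, so $b_3$ is adjacent to $b_1$ while $a_3$ is not adjacent to $a_1$, and Duplicator has already lost no matter what happens in the fourth inning.

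The missing ingredient is a rule that picks the correct side, and there are two ways to supply it. The paper's proof keeps track of the cyclic order: the played vertices cut each cycle into arcs in corresponding order, Challenger's new vertex subdivides one arc, and Duplicator must answer \emph{in the corresponding arc} of the other cycle, which pins down the side automatically and makes compatibility with all non-cluster vertices immediate. Alternatively, you can keep your purely metric invariant and prove that at most one of the two sides of $b_s$ can be blocked: if $b_t$ blocks one side and $b_{t'}$ blocks the other, one checks (using that blockers are within $2^{n-j-1}$ of the candidate position, hence within $2^{n-j}$ of $b_s$, hence at matched distances $D_1=d(a_s,a_t)$, $D_2=d(a_s,a_{t'})$, both in $[1,2^{n-j}]$) that $a_t$ and $a_{t'}$ must both lie on the side of $a_s$ opposite to $a_{j+1}$, while $b_t$ and $b_{t'}$ lie on opposite sides of $b_s$; then $d(a_t,a_{t'})=|D_1-D_2|<2^{n-j}$ while $d(b_t,b_{t'})=D_1+D_2\neq|D_1-D_2|$, so the pair $(t,t')$ is neither matched nor far, contradicting $(n-j)$-goodness. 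Either fix completes your argument, but some such step is indispensable: the halving mechanism alone does not make the choice of $b_{j+1}$ automatic.
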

	
	\begin{proof}The \emph{distance}, $d(x,y)$ between two vertices $x,y$ in a graph is the smallest $l$ such that there is a path from $x$ to $y$ of length $l$ in which each step consists of moving to an adjacent vertex.\footnote{If there is no such $l$ then we  set $d(x,y)=\infty$.}
		For $j\geq 1$ we say that two vertices $x,y$ in $C_m$ are $j$-far if $d(x,y)>2^{j}$, and that a path is $j$-long if its length is $>2^{j}$. Note that vertices are 0-near if and only if they are adjacent. 
		We can now describe the strategy for the Duplicator. The positions in this game are so similar that there is no harm in pretending that the Challenger always chooses vertices from the first graph.  To increase the stakes, we switch to the  first-person narrative. In the first inning, we can play any vertex $b_1$. In the second move, we have vertices $a_1$ and $a_2$ in one of the graphs and $b_1$ in the second. If one of the paths is $n$-short (meaning, not $n$-long), then choose $b_2$ so that $d(b_1,b_2)=d(a_1,a_2)$. The salient point is that, because
		\begin{figure}[h]
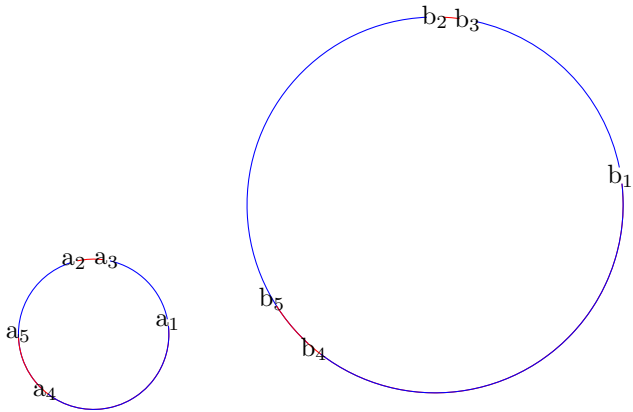

			\mycircle {1} a {25}\qquad 
			\mycircle {2.5} b {10}
			\caption{The position after five innings. Since $m$ and $l$ are very large, we zoomed out so much that the vertices blended into two continuous circles. The corresponding short (red) arcs are of the same length, while the long (blue) arcs are simply sufficiently long. No matter what vertex the Challenger plays, we can respond  by choosing a vertex in the corresponding arc that is either at the right distance or sufficiently far from the corresponding ends.}
		\end{figure}
		$\min(m,k)>2^{n+1}$,  the longer path between $a_1$ and $a_2$ is $n$-long, and so is the longer path between $b_1$ and $b_2$.  If $a_1$ and $a_2$ are $n$-far, then take $b_2$ to be any vertex $n$-far from $b_1$; this is possible because $\min(m,k)>2^n$. At this stage we have vertices $a_1$ and $a_2$ `splitting' one cycle into two arcs and $b_1$ and $b_2$ `splitting' the other cycle into two arcs. Either all of the arcs are sufficiently long, or the short arcs are of the same length. 
		
		Our strategy in the $k+1$-st inning for $k\geq 2$ hinges on the following inductive hypothesis  holding after the $k$-th inning. 
		
		\begin{itemize}
			\item [(IH$_k$)] The vertices $a_1,\dots, a_k$ and $b_1,\dots, b_k$ are arranged on their cycles so that for all $i$ and $j$ either $d(a_i,a_j)=d(b_i,b_j)$ or $a_i$ and $a_j$ are $k+1$-far and $b_i$ and $b_j$ are $k+1$-far. 
		\end{itemize}
		After the $k$-th inning for $k\geq 2$, Challenger adds a vertex $a_{k+1}$ to one of the arcs, between vertices $a_i$ and $a_j$ for some $i,j$. If it is added to a short arc, then we can choose $b_3$ so that sides of the `triangle'\footnote{These triangles are rather flat, but no worries.} $b_i,b_j,b_k$ are equal to the corresponding sides of the `triangle' with the vertices $a_i,a_j,a_k$.
		If $a_k$ is added to a long arc, but it is $n-1$-near $a_i$, then we play $b_k$  on the arc $b_i,b_j$ so that $d(b_k,b_i)=d(a_k,a_i)$. Otherwise, play~$b_k$ that is $k$-far from both $b_i$ and $b_j$.
		After this move, the inductive hypothesis is satisfied. This shows that we can hold on for at least $n$ moves. When the game is over, $b_i$ and $b_j$ are $0$-near if and only if $a_i$ and $a_j$ are 0-near. Since `0-near' is the same as `adjacent',  we win. 
	\end{proof}
	
	This tells us that sufficiently long cycles are similar.  What else is new, you may wonder? What if we choose graphs at random, by fixing the vertex set and then for each pair of vertices flipping a coin to decide whether they are adjacent?
	If for all $j$,~$G_j$ is a randomly chosen graph with $j$ vertices, then for every $n$ there is $f(n)$ such that if $\min(m,k)\geq f(n)$ then Duplicator wins $\Gamma(G_m,G_k,n)$. This follows from the so-called 0-1 law for random graphs (\cite{shelah1988zero}) and an observation that requires a syntactical definition.\footnote{A mathematical logician is a mathematician who takes  the syntax seriously.} 
	. 
	
	A first-order statement about graphs is expressed by quantification over the vertices, and it involves the adjacency  and equality relations. We will consider only statements that  have no free variables; they are called `sentences' (as opposed to `formulas').\footnote{A second-order statement is one in which quantification over arbitrary subsets of the structure is allowed. It is considerably more expressive than the first-order logic. The latter lies in the `Goldilocks zone' in terms of expressiveness, and in the final section we will see some of the evidence for this.}
	and it involves the adjacency relation $\E$ and the equality relation $=$. For this purpose we have to introduce some mathematical (or rather logical) notation: First of all, there is the \emph{universal quantifier} $\forall$ which reads as ``for all'' and the \emph{existential quantifier}~$\exists$ which reads as ``there exists''. Since we are interested in first-order statements about graphs quantified over the vertices, $\forall x$ reads as ``for all vertices $x$'' and similarly $\exists x$ reads as ``there exists a vertex $x$''. Furthermore, we need the logical connectives $\land$ (``and''), $\lor$ (``or''), $\lnot$ (``not''), and $\rightarrow$ (``implies''). For example,
	\[ 
	\varphi_1: (\exists x)(\forall y)y\neq x\to y\E x
	\] 
	reads as ``there is a vertex $x$ such that for all vertices $y$, $y\neq x$ implies that $y$ and $x$ are connected by an edge'' and hence asserts that some vertex is adjacent to all other vertices. Now try it yourself! Can you translate the following statement?
	\[
	\varphi_2: (\forall x) (\forall y) x\neq y\to (\exists z) z\E x\wedge \lnot (z\E y) \ \footnotemark
	\]
	But what does this  have to do with games? Well, everything. But before we go into the details, have a look at the two graphs in Appendix~\ref{game1}. Play the game on the two graphs in three innings. Can you find a winning strategy for either Challenger or Duplicator?  \footnotetext{Solution: ``For all vertices $x$ and for all vertices $y$, $x \neq y$ implies that there exists a vertex $z$ such that an edge connects $z$ and $x$ and no edge connects $z$ and $y$.'' This means that for any two distinct vertices $x,y$ some vertex is connected to $x$ but not to~$y$.}
	
	Are you back? Here is the solution. For any two graphs $G_1$ and $G_2$, if~$G_1$ satisfies~$\varphi_2$ and~$G_2$ does not (check that this is true for the two graphs in Appendix~\ref{game1}), then Challenger wins $\Gamma(G_1,G_2,3)$ as follows. In the first two innings, Challenger chooses $a_1$ and $a_2$ in $G_2$ so that no $z$ satisfies $z\E a_1$ and $\lnot (z\E a_2)$. No matter how $b_1$ and $b_2$ are chosen, in the third inning we can choose $b_3$ so that $b_3\E b_1$ and $\lnot (b_3\E b_2)$, and Duplicator is doomed. 
	
	Now have a look at the graphs in Appendix~\ref{game2}. Play the game on the two graphs with varying numbers of innings. How many innings are necessary for a winning strategy for Challenger to exist?\footnote{The answer is 4.} 
	
	A fun little calculation shows that as the number of vertices in a sufficiently random graph converges to $\infty$, the probability that $\psi$ holds in it converges to 0 and the probability that $\varphi$ holds in it converges  to $1$. The 0-1 law asserts that every first-order statement behaves in one of these two ways: in a sufficiently large and sufficiently random graph,  it is either almost certainly true or almost certainly false.

	But what does this  have to do with games? Well, everything. If $G_1$ satisfies~$\varphi$ and~$G_2$ does not, then the Challenger wins $\Gamma(G_1,G_2,3)$, as follows.   We switch perspective and identify with the winning side. In the first two innings, we choose $a_1$ and $a_2$ in $G_2$ so that no $z$ satisfies $z\E a_1$ and $\lnot (z\E a_2)$. No matter what the choice of $b_1$ and $b_2$ is, in the third inning we can choose $b_3$ so that $b_3\E b_1$ and $\lnot (b_3\E b_2)$, and the Duplicator is doomed. 
	
	For every first-order statement $\psi$ there is $n$ large enough so that the Duplicator loses $\Gamma(G_1,G_2,n)$ if $G_1$ and $G_2$ `disagree' on the truth of $\psi$. The least such $n$ is the \emph{quantifier depth} of $\psi$;  let's just say that the name is well-chosen instead of giving a proper definition.
	Conversely, if $G_1$ and $G_2$ agree on the truth of all first-order statements of quantifier depth $n$, then Duplicator wins $\Gamma(G_1,G_2,n)$. 
	
	These are the Ehrenfeucht--Fra\" isse games, affectionately known as the EF-games (see \cite[\S 3.3]{Hodg:Model}).  
	Their variations can be used to compare mathematical structures of all sorts:  linear orderings, groups, rings,\dots{}  In each case, the EF-games provide efficient means for verifying whether two given structures satisfy the same first-order statements.    Before moving on, we state another fun fact.  It applies to arbitrary structures but for simplicity we stick to graphs.
	
	\begin{theorem}\label{T.graphs-convergence}
		If $G_m$, for $m\in \bbN$, is any sequence of graphs, then there is  a subsequence $G_{m(i)}$, for $i\in \bbN$, such that for every $n$ there is $k(n)$ such that if $\min(m(i),m(j))>k(n)$ then Duplicator wins $\Gamma(G_{m(i)}, G_{m(j)}, n)$. 
	\end{theorem}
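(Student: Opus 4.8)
The plan is to reduce the theorem to a finiteness phenomenon and then to a routine diagonalization. Write $G \equiv_n H$ to mean that Duplicator wins $\Gamma(G,H,n)$. By the characterization recalled just above, $G \equiv_n H$ holds exactly when $G$ and $H$ satisfy the same first-order sentences of quantifier depth at most $n$; in particular $\equiv_n$ is honestly an equivalence relation (``having the same depth-$\le n$ theory''), and it does not matter in which order we list the two graphs, since the rules of $\Gamma$ are symmetric in its two graph arguments. Two properties of the relations $\equiv_n$ will do all the work. First, $\equiv_i$ \emph{refines} $\equiv_n$ whenever $i \ge n$: a sentence of quantifier depth at most $n$ is in particular of depth at most $i$, so agreement on the larger class of sentences forces agreement on the smaller. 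Second, and this is the crux, for each fixed $n$ the relation $\equiv_n$ has only \emph{finitely many} classes. Indeed, over the signature consisting of the single adjacency relation $\E$ and equality there are, up to logical equivalence, only finitely many sentences of quantifier depth at most $n$ (the classical Hintikka normal forms); a graph's $\equiv_n$-class is determined by which among these finitely many sentences hold in it, so there are at most finitely many classes.

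Granting this, I would build the subsequence by a pigeonhole stabilization, one inning-count at a time. Starting from $S_0 = \bbN$, construct a decreasing chain of infinite index sets $\bbN = S_0 \supseteq S_1 \supseteq S_2 \supseteq \cdots$ so that for every $n$ all the graphs $G_m$ with $m \in S_n$ lie in a single $\equiv_n$-class. The inductive step is immediate from the finiteness above: the relation $\equiv_n$ splits $\{G_m : m \in S_{n-1}\}$ into finitely many classes, so one class captures infinitely many indices of $S_{n-1}$, and I let $S_n$ be that infinite set of indices. Finally, choose $m(1) < m(2) < \cdots$ with $m(i) \in S_i$; this is possible because the $S_i$ are infinite and nested.

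The verification is then bookkeeping, and by the symmetry of $\Gamma$ we may assume throughout that $i \le j$. Then $m(i) \in S_i$ and $m(j) \in S_j \subseteq S_i$, so $G_{m(i)}$ and $G_{m(j)}$ lie in the common $\equiv_i$-class of $S_i$; hence $G_{m(i)} \equiv_i G_{m(j)}$, that is, Duplicator wins $\Gamma(G_{m(i)}, G_{m(j)}, i)$, and by the refinement property Duplicator wins $\Gamma(G_{m(i)}, G_{m(j)}, \ell)$ for every $\ell \le i$. Now set $k(n) = m(n)$. If $\min(m(i), m(j)) > k(n)$, then since $i \le j$ and $m$ is strictly increasing we have $m(i) > m(n)$, whence $i > n$; as Duplicator wins the $i$-inning game and $n \le i$, Duplicator wins $\Gamma(G_{m(i)}, G_{m(j)}, n)$, exactly as required.

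The one step carrying genuine content is the finiteness of the number of $\equiv_n$-classes, and I expect that to be the only real obstacle: it rests on the finiteness of quantifier-depth-$n$ sentences over a finite relational signature rather than on the games themselves, so it is the natural place to invoke the Ehrenfeucht--Fra\"{\i}sse machinery. Everything else is the familiar ``stabilize in each coordinate, then diagonalize'' pattern --- essentially a Ramsey-type argument --- and it would carry over unchanged to sequences of arbitrary structures over any finite relational language.
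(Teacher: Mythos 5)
Your proof is correct, and at heart it is the same ``stabilize, then diagonalize'' argument the paper sketches, but the two stabilizations are organized differently in a way worth noting. The paper enumerates \emph{all} first-order sentences $\varphi_j$ of the language of graphs and thins the sequence so that for each $j$ all but finitely many graphs of the subsequence agree on $\varphi_j$; each step is a binary pigeonhole ($\varphi_j$ holds, or fails, on infinitely many indices). You instead stabilize one quantifier depth at a time, taking as your key lemma that the relation $\equiv_n$ (agreement on all sentences of quantifier depth at most $n$, equivalently Duplicator winning the $n$-inning game) has only finitely many classes, so each step is a finite pigeonhole. The routes are interchangeable, but yours has a genuine advantage: the finiteness of depth-$n$ classes up to logical equivalence is exactly what is needed to pass from ``for each individual sentence the subsequence eventually agrees on it'' to the uniform threshold $k(n)$ that the theorem demands --- syntactically there are infinitely many sentences of depth at most $n$, so per-sentence thresholds alone do not yield a single $k(n)$. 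The paper's sketch leaves this point implicit, whereas you build it in from the start, together with an explicit $k(n)=m(n)$ and a clean verification; in that sense your write-up is the more self-contained of the two. Your closing remark is also accurate: nothing in the argument is special to graphs, and it goes through verbatim for sequences of structures over any finite relational language, which matches the paper's own comment that such facts ``apply to arbitrary structures.''
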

	
	Thus every sequence of graphs has a subsequence such that any two graphs in it that are sufficiently `further down the road' cannot be distinguished by a short EF-game. To prove Theorem~\ref{T.graphs-convergence}, one observes that the set of first-order sentences in the language of graph theory can be enumerated as $\varphi_j$, for $j\in \bbN$. 
	It then suffices to assure that our sequence satisfies the `0-1 law', that all but finitely many $G_{m(i)}$ `agree' on the truth of  every $\varphi_j$. This is achieved by a diagonalisation argument.

	\section{Matrices}
	
	During a legendary 1925 vacation in Heligoland,   Werner Heisenberg noticed that because of the noncommutativity of matrix multiplication, matrix algebras provide natural setting for quantum mechanics. A few years later, John von Neumann pointed out that the infinite-dimensional analog of $\bbM_m$ is more suitable for this purpose, and suggested algebras of operators known as II$_1$ factors as a model. We will see a II$_1$ factor (or perhaps many II$_1$ factors---that is the question  that we are aiming at) in the final section.  Let's first take a closer look at what the matrix algebras are all about. 
	
	Consider the (orthogonal) projection $p$ to the line $x=y$, or the   rotation $\rho_\alpha$ of the plane by an angle $\alpha$ around the origin (Fig. \eqref{Fig.RotationAndProjection}). 
	\begin{figure}[h]
		\begin{tikzpicture}[domain=-0.2:3.2]
			\def\offset{0.2}
			\draw[->] (-0.2,0) -- (3.2,0) node[right] {$x$};
			\draw[->] (0,-0.2) -- (0,3.2) node[above] {$y$};
			\draw   [color=red]  plot (\x,\x)             node[right] {$x=y$};
			\node at (1,3) {$\bullet$};
			\node at (1+\offset,3+\offset) {$\mu$};
			\node at (2,0) {$\bullet$};
			\node at (2+\offset,0+\offset) {$\nu$};
			\node at (1,1) {$\bullet$}; 
			\node at (1+\offset, 1+\offset) {$p(\nu)$};
			\node at (2,2) {$\bullet$}; 
			\node at (2+\offset, 2+\offset) {$p(\mu)$};
		\end{tikzpicture}
		\begin{tikzpicture}[domain=-0.2:3.2]
			\def\offset{0.2}
			\def\radius{4}
			\draw[->] (-2.2,0) -- (2.5,0) node[right] {$x$};
			\draw[->] (0,-0.2) -- (0,3.2) node[above] {$y$};
			\node at (1/2, 2) {$\bullet$};
			\node at (1/2+\offset, 2+\offset) {$\xi$}; 
			\node at (1, 0) {$\bullet$};
			\node at (1+\offset, 0+\offset) {$\eta$}; 
			\node at (0,1) {$\bullet$};
			\node at (0+\offset, 1+\offset) {$\rho_{\pi/2}(\eta)$}; 
			\node at (-2,1/2) {$\bullet$}; 
			\node at (-2-\offset,1/2-\offset) {$\rho_{\pi/2}(\xi)$}; 	
			\draw (0,1) (1,0) arc (0:86:1) [->]; 
			\draw (-2,1/2) (1/2,2)  arc (76:163:2.06) [->]; 
		\end{tikzpicture}
		\caption{\label{Fig.RotationAndProjection}The projection to the line~$x=y$ and the rotation by $\pi/2$ around the origin as applied to some points randomly chosen in the plane.}
	\end{figure}
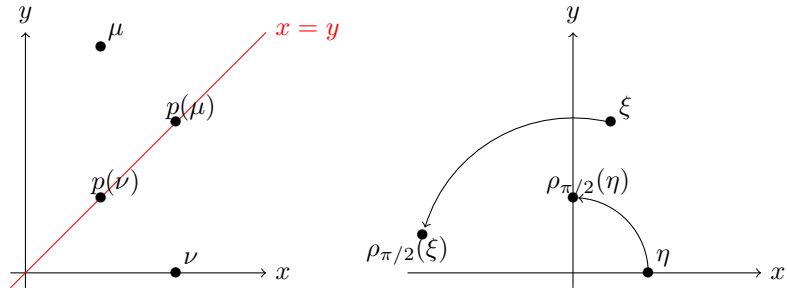
	These transformations are implemented by matrices, via
	\begin{align*}
		\qquad
		p\left(\begin{pmatrix} a\\ b\end{pmatrix}\right)&=
		\begin{pmatrix} 
			\frac a2  + \frac b2 \\
			\frac a2  + \frac b2 	
		\end{pmatrix}
		=
		\begin{pmatrix} 
			\frac 12 & \frac 12 \\
			\frac 12 & \frac 12
		\end{pmatrix}
		\begin{pmatrix} a\\ b\end{pmatrix},\\ 
		\rho_\alpha\left( \begin{pmatrix} a\\ b\end{pmatrix}\right)&=
		\begin{pmatrix} 
			\cos(\alpha)a  -\sin(\alpha)b \\
			\sin(\alpha)a + \cos(\alpha)b
		\end{pmatrix}
		=\begin{pmatrix} 
			\cos(\alpha) & -\sin(\alpha) \\
			\sin(\alpha) & \cos(\alpha)
		\end{pmatrix}
		\begin{pmatrix} a\\ b\end{pmatrix}.
	\end{align*}
	In case you haven't seen it yet, the product of a $2\times 2$ matrix and a 2-vector is defined as follows: the first entry of the image of a vector 	$\begin{pmatrix} a\\ b\end{pmatrix}$ is the dot product  of the first row of the matrix with the vector, and its second entry is the dot product  of the first row of the matrix with the vector. This works in the higher-dimensional context, for example when $n=3$:
	\[
	\begin{pmatrix}
		\color{red} {c_{11}} & \color{red}{c_{12}} & \color{red}{c_{13}}\\
		\color{blue}{c_{21}} & \color{blue}{c_{22}} & \color{blue}{c_{23}}\\
		\color{cyan}{c_{31}} & \color{cyan}{c_{32}} & \color{cyan}{c_{33}}
	\end{pmatrix}
	\begin{pmatrix}
		a_1 \\ a_2 \\ a_3
	\end{pmatrix}
	=
	\begin{pmatrix}
		\color{red}{\sum_{i=1}^3 c_{1i}a_i} \\
		\color{blue}{\sum_{i=1}^3 c_{2i}a_i} \\
		\color{cyan}{\sum_{i=1}^3 c_{3i}a_i }.
	\end{pmatrix}.
	\]
	To multiply two $m\times m$ matrices, $a$ and $b$, one applies this formula to each column of $b$ and produces the corresponding column of the result,~$ab$. 
	
	Fix $m\geq 2$ and let $\bbC^m$ be the space of $m$-tuples (called $m$-vectors) of complex numbers.\footnote{Why not the real numbers? It is a long story, but ultimately because we like every polynomial to have a root and we prefer an isometry such as $\rho_{\pi/2}$ to be diagonalisable. However, all that I am about to say makes perfect sense if $\bbC$ is replaced by $\bbR$, and a reader who feels more comfortable believing that $x^2\geq 0$ for all $x$ is encouraged to do so, ignore the adjoint operation defined below, and skip the final section.}  This is a \emph{vector space} over $\bbC$; what this means is that we can add vectors pointwise and multiply them by a complex number (called scalar).  Some $T\colon \bbC^m\to \bbC^m$ is a \emph{linear transformation} if for all vectors $v, w$ and all scalars $x,y$ it satisfies 
	\[
	T(xv+ yw)=x T(v)+yT(w). 
	\]
	\begin{lemma}
		Every  linear transformation of $\bbC^m$ is implemented via multiplication by an $m\times m$ matrix, and multiplication by a fixed $m\times m$ matrix is a linear transformation of $\bbC^m$. 
		Multiplication of matrices corresponds to composition of linear transformations. \end{lemma}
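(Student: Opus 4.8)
The plan is to treat the three assertions separately, with the first carrying essentially all of the content and the other two being short verifications. For the \emph{converse} direction (that multiplication by a fixed $m\times m$ matrix is linear), I would simply invoke the displayed formula for the matrix--vector product: each coordinate of the image $cv$ is a dot product of a fixed row of $c$ with the input vector, and a dot product is linear in its vector argument. Reading off coordinate by coordinate then gives $c(xv+yw)=x(cv)+y(cw)$, which is exactly the required identity.

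For the main assertion — that every linear $T\colon\bbC^m\to\bbC^m$ is implemented by some matrix — I would exploit the standard basis $e_1,\dots,e_m$, where $e_i$ has a $1$ in position $i$ and $0$ elsewhere. Any vector decomposes as $v=\sum_i a_i e_i$ when $v=(a_1,\dots,a_m)$, so applying linearity repeatedly yields $T(v)=\sum_i a_i T(e_i)$. The key observation is that this already shows $T$ is completely determined by the $m$ vectors $T(e_1),\dots,T(e_m)$. I would then \emph{define} the matrix $c$ whose $i$-th column is $T(e_i)$, so that $c_{ji}=(T(e_i))_j$, and check that $cv=T(v)$ for every $v$: the $j$-th coordinate of $cv$ is $\sum_i c_{ji}a_i=\sum_i a_i (T(e_i))_j=(T(v))_j$. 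This coordinatewise identity is the heart of the proof.

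Finally, for the claim that composition corresponds to matrix multiplication, let $a$ and $b$ implement transformations $T_a$ and $T_b$. Then $(T_a\circ T_b)(v)=a(bv)$, so I must verify $a(bv)=(ab)v$. Since $ab$ was \emph{defined} to be the matrix obtained by applying $a$ to each column of $b$, expanding both sides coordinatewise and interchanging the order of the two finite summations gives the equality. The closest thing to an obstacle anywhere in the argument is precisely this bookkeeping with the double sum in the associativity check, together with keeping the row and column indices of $c$ straight in the first part; neither is a genuine difficulty, and the proof is routine once the role of the standard basis is isolated.
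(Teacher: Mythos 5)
Your proof is correct. The paper states this lemma without any proof at all (it is standard linear algebra, left implicit in this expository snapshot), and your argument --- taking the columns of the matrix to be the images $T(e_1),\dots,T(e_m)$ of the standard basis vectors, checking linearity of $v\mapsto cv$ coordinatewise, and verifying $a(bv)=(ab)v$ by interchanging two finite sums --- is exactly the canonical argument the paper implicitly relies on.
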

	
	Thus studying linear transformations of the $m$-dimensional space is the same as  studying the algebra of $m\times m$ matrices; we opt for the latter. 
	We will also need a third operation, the adjoint $a^*$: to compute it, transpose the matrix and take complex conjugates of its entries. Thus $((a_{ij})_{i,j\leq m})^*=(\overline a_{j,i})_{i,j\leq m}$. 
	The adjoint of the matrix that corresponds to a rotation $\rho_{\alpha}$ is the matrix that implements the rotation $\rho_{-\alpha}$, i.e., its multiplicative inverse. The matrices whose adjoint is their multiplicative inverse are called \emph{unitary}. A matrix that corresponds to an (orthogonal)  projection is its own adjoint; such matrices are called \emph{self-adjoint} or \emph{Hermitian}. 
	
	By $\bbM_m$ we denote the algebra of all complex $m\times m$ matrices $a=(a_{ij})_{i,j\leq m}$, equipped with multiplication, addition, adjoint, and multiplication by scalars. (Multiplying a matrix by a scalar appropriately stretches---or shrinks--a given transformation.) By~$1_m$ we denote the \emph{unit matrix} that has $1$'s down the main diagonal and 0's elsewhere; in other words, $a_{ij}=1$ if $i=j$ and $a_{ij}=0$ otherwise. It has the property that $1_m a=a 1_m=a$ for all $a$ in~$\bbM_m$.

	\subsection{Let the games begin}
	For $l,m$, and $n$ one could  consider the following game, $\Gamma_{\textrm{ts}}(\bbM_l,\bbM_m,n)$. 
	Our two players choose elements from $\bbM_l$ and $\bbM_m$  as they did for graphs, and Duplicator wins if the chosen matrices $a_1,\dots, a_n$ and $b_1,\dots , b_n$ `look the same'. This means that for all $i,j,k$  and all   complex numbers $y,z$  we have 
	$a_i a_j=a_k$ if and only if $b_i b_j =b_k$, $ya_i+za_j=a_k$ if and only if $yb_i+zb_j = b_k$, and $a_i^*=a_j$ if and only if $b_i^*=b_j$. 
	
	One could indeed consider this game, but we will not (the subscript `ts' stands for `too strict'). Strict equality is fine for graphs and other discrete structures, but when in the matrix there is no need for exactness.\footnote{Please don't read much into this sentence.} Let's make this precise.

	The length of an $m$-vector $\xi$ with coordinates $x_1,\dots, x_m$ is defined as  
	\[
	\textstyle\|\xi\|=\sqrt {\sum_{i=1}^m |x_i|^2}. 
	\]
	This is justified by the Pythagorean theorem if $m=2$, supplemented with an induction argument for larger values of $m$. 
	There are at least two common ways to define the length (called `norm') of a matrix. 
	First, every $a=(a_{ij})_{i,j\leq m}$ in~$\bbM_m$ is an $m^2$-vector, and its \emph{normalized Hilbert--Schmidt} norm is 
	\[
	\textstyle \|a\|_\HS= \sqrt {\frac 1m\sum_{i,j=1}^m |a_{ij}|^2}. 
	\]
	It is `normalized' because of the factor $\frac 1m$, assuring that $\|1_m\|_\HS=1$. 
	
	The \emph{operator norm} may be slightly less intuitive and it is notoriously difficult to compute for an arbitrary matrix, but it is in some sense even more natural than~$\|\cdot\|_\HS$. It is defined as 
	\[
	\|a\|_\OP=\sup\{\|a\xi\|: \|\xi\|=1\}. 
	\]
	This is the `maximal stretching coefficient': $\|a\|_\OP=r$ means that $\|a\xi\|=r\|\xi\|$ for some\footnote{Do you see why is the sup attained?} nonzero $\xi$ and $\|a\xi\|\leq r\|\xi\|$ for all $\xi$. 
	One has $\|1_m\|_\OP=\|1_m\|_\HS=1$ and $\|\cdot\|_\HS\leq \|\cdot\|_\OP$.  On the other hand, any $m\times m$ matrix $a$ that has $m$ in one entry and 0 in all other entries satisfies $\|a\|_\HS=1$ and $\|a\|_\OP=m$.  
	
	\subsection{Let the correct games begin}
	For $l,m, n$, and $\varepsilon>0$,   we will consider the game $\Gamma_\HS(\bbM_l,\bbM_m,n,\varepsilon)$ which is played like  
	$\Gamma_{\textrm{ts}}(\bbM_l,\bbM_m,n)$, with two differences: 
	\begin{enumerate}[label = (P\arabic*), itemindent=10pt]
		\item \label{P1}Players can choose only matrices $x$ that satisfy $\|x\|_\OP\leq 1$.\footnote{The subscript $\OP$ is not a typo. The correct ambient for our purposes is the operator unit ball of $\bbM_m$, regardless of which one of the norms we consider..} 
		\item \label{P2} Duplicator wins  if and only if for all $i,j,k$  and all scalars $y,z$ such that $\max(|y|,|z|)\leq 1$,  each of $|\|a_i\|_\HS-\|b_i\|_\HS|$, 
		$|\|a_i a_j-a_k\|_\HS-\|b_i b_j -b_k\|_\HS|$, $|\|y a_i+za_j-a_k\|_\HS-\|y b_i+zb_j -b_k\|_\HS|$, and $|\|a_i^*-a_j\|_\HS-\|b_i^*-b_j\|_\HS|$ is no greater than~$\varepsilon$.
	\end{enumerate}
	The game $\Gamma_\OP(\bbM_l,\bbM_m,n,\varepsilon)$ is defined analogously, with the payoff in \ref{P2} computed using $\|\cdot\|_\OP$ instead of $\|\cdot\|_\HS$.

	For fixed $m$ and $l$, as $n\to \infty$ and/or $\varepsilon\to 0$, the game becomes more difficult for the Duplicator. As in the case of graphs, the Duplicator wins $\Gamma_*(\bbM_l,\bbM_m,n,\varepsilon)$ (for $*\in \{\HS,\OP\}$) if $\bbM_l$ and~$\bbM_n$ agree (up to $\varepsilon$) on the values of continuous first-order statements (appropriately defined using one of the two norms, see~\eqref{eq.psi} below) of quantifier depth $n$. 
	
	The time has come to talk of many kinds of matrices. 
	A matrix $a$ is a \emph{projection} if it is self-adjoint ($a=a^*$) and idempotent ($a^2=a$). The \emph{rank} of a projection is the dimension of its range space $\{a\xi:\xi\}$. A matrix~$v$ is a \emph{partial isometry} if $v^*v$ is a projection. This implies that $vv^*$ is also a projection,  and ranks of the projections $v^*v$ and $vv^*$ are equal because $v$ is an isometry between their ranges. For example, $v=\begin{pmatrix} 0 & 1 \\ 0 & 0 \end{pmatrix}$ is a partial isometry in $\bbM_2$ with $vv^*=\begin{pmatrix} 1 & 0 \\ 0 & 0 \end{pmatrix}$ and $v^*v=\begin{pmatrix} 0 & 0 \\ 0 & 1 \end{pmatrix}$ projections of rank 1. Note that $v^*v+vv^*=1_2$. If $v\in \bbM_m$ is a partial isometry such that $v^*v+vv^*=1_2$, then~$m$ is exactly twice the rank of~$v^*v$, hence even. Conversely, if $m$ is even then such partial isometry exists.

	\begin{theorem}\label{T.evenodd}
		For all sufficiently small~$\varepsilon>0$  and all even 
		$m$ and  odd~$l$, Challenger wins $\Gamma_\OP(\bbM_l,\bbM_m,6,\varepsilon)$. 
	\end{theorem}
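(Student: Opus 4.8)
The plan is to make Challenger play, in the even algebra $\bbM_m$, the ``unital fermion'' configuration attached to a partial isometry $v$ with $v^*v+vv^*=1_m$ (such $v$ exists precisely because $m$ is even, as recorded just before the statement), and then to argue that any $\varepsilon$-faithful copy of it in $\bbM_l$ collides with the oddness of $l$. First I would have Challenger play the six elements $a_1=v$, $a_2=v^*$, $a_3=v^*v$, $a_4=vv^*$, $a_5=1_m$, and $a_6=v+v^*$. In $\bbM_m$ these satisfy a short list of relations, each of one of the four shapes permitted in \ref{P2}: $a_1^*=a_2$; $a_2a_1=a_3$ and $a_1a_2=a_4$; each of $a_3,a_4$ is a self-adjoint idempotent; $a_3+a_4=a_5$; and $a_6=a_1+a_2$ with $a_6^*=a_6$ and $a_6a_6=a_5$; finally $\|a_5\|_\OP=\|a_6\|_\OP=1$. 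Every one of these expressions takes a fixed value in $\bbM_m$---almost all of them $0$---so the payoff \ref{P2} forces the matching expression in Duplicator's matrices to lie within $\varepsilon$ of that value.

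Next I would read the payoff as a bundle of approximate operator identities in $\bbM_l$. Setting $b:=b_1$ and using $\|b\|_\OP\le1$ from \ref{P1}, a handful of triangle inequalities give $\|b_2-b^*\|_\OP,\ \|b_3-b^*b\|_\OP,\ \|b_4-bb^*\|_\OP=O(\varepsilon)$; moreover $p:=b^*b$ and $q:=bb^*$ are each within $O(\varepsilon)$ of a self-adjoint idempotent, and $\|p+q-b_5\|_\OP=O(\varepsilon)$. Granting for the moment that $b_5$ is forced to be within $O(\varepsilon)$ of $1_l$, I would conclude $\|p+q-1_l\|_\OP=O(\varepsilon)$.

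The contradiction is then dimension-free. Since $p=b^*b$ and $q=bb^*$ are respectively $XY$ and $YX$ for $X=b^*$ and $Y=b$, they share one characteristic polynomial and hence have the same eigenvalues with multiplicities. As $p$ is an $O(\varepsilon)$-approximate projection its eigenvalues sit within $O(\varepsilon)$ of $\{0,1\}$; writing $k$ for the number near $1$, $p$ lies within $O(\varepsilon)$ of an exact projection $P$ of rank $k$, and $q$ lies within $O(\varepsilon)$ of an exact projection $Q$ of the same rank $k$. But $\|p+q-1_l\|_\OP=O(\varepsilon)$ yields $\|Q-(1_l-P)\|_\OP=O(\varepsilon)<1$, and two projections at operator-norm distance below $1$ have equal rank; hence $k=\mathrm{rank}(1_l-P)=l-k$, so $l=2k$ is even. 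Choosing $\varepsilon$ below the threshold hidden in these $O(\varepsilon)$'s, this is incompatible with $l$ odd, so Duplicator fails the payoff and Challenger wins.

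The hard part will be exactly the bracketed claim that $b_5$ must be close to $1_l$: without it the relations only place $b_1,\dots,b_4$ inside a corner $P\bbM_lP$ and show that corner to be even-dimensional, which a proper corner of an odd $\bbM_l$ can perfectly well be. This is where the sixth move $a_6=v+v^*$ and the ball constraint \ref{P1} must do the work, since $a_6$ is a self-adjoint \emph{unitary}: from $\|b_6\|_\OP\le1$, $\|b_6-b_6^*\|_\OP=O(\varepsilon)$, $\|b_6b_6-b_5\|_\OP=O(\varepsilon)$, and the norm datum $\|b_6\|_\OP\ge\|a_6\|_\OP-\varepsilon=1-\varepsilon$, one wants to pin the entire spectrum of $b_6$ near $\{-1,1\}$ and so force $b_6b_6\approx1_l$, hence $b_5\approx1_l$, from below as well as from above. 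Turning the single inequality $\|b_6\|_\OP\ge1-\varepsilon$, which by itself only controls the top of the spectrum, into full two-sided control that is uniform in $l$ is the crux on which the whole argument rests, and is where I would concentrate the real work.
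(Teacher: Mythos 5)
You have located exactly the right pressure point, but the conclusion to draw is sharper than ``this is where the real work lies'': under the literal reading of \ref{P2}, that work cannot be done --- not with $a_6=v+v^*$, and not with any other sixth move. The payoff compares only norms of $*$-polynomial expressions \emph{without constant term} in the played elements, so Duplicator may answer your whole configuration inside a proper corner of $\bbM_l$. Take a projection $r\in\bbM_l$ of rank $l-1$ (even, since $l$ is odd, and nonzero once $l\ge 3$), take a partial isometry $w$ with $w^*w+ww^*=r$, and let Duplicator answer your moves $v,v^*,v^*v,vv^*,1_m,v+v^*$ by $w,w^*,w^*w,ww^*,r,w+w^*$. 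Every expression occurring in \ref{P2}, evaluated at the $a$'s, lies in the linear span of $vv^*,v,v^*,v^*v$, hence is unitarily equivalent to $\bigl(\begin{smallmatrix} y_{11}1_{m/2} & y_{12}1_{m/2}\\ y_{21}1_{m/2} & y_{22}1_{m/2}\end{smallmatrix}\bigr)$ for some scalars $y_{ij}$; the corresponding expression at the $b$'s is unitarily equivalent to the same block matrix with $1_{m/2}$ replaced by $1_{(l-1)/2}$ and padded by a zero row and column. Both have operator norm equal to that of the $2\times 2$ matrix $(y_{ij})$, so every payoff quantity agrees \emph{exactly}: Duplicator wins even with $\varepsilon=0$. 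In particular this $b_6=w+w^*$ is self-adjoint, has $\|b_6\|_\OP=1$, and satisfies $b_6^2=b_5=r$ on the nose, yet its spectrum is $\{-1,0,1\}$; as you feared, $\|b_6\|_\OP\ge 1-\varepsilon$ sees only the top of the spectrum, and no \ref{P2}-expressible condition can exclude the eigenvalue $0$. (More generally, whenever $m<l$, Duplicator can answer any moves made in $\bbM_m$ by their images under the isometric $*$-embedding $a\mapsto a\oplus 0_{l-m}$, so for such pairs no Challenger strategy confined to $\bbM_m$ and relying only on \ref{P2}-expressible relations can succeed.)

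What makes the theorem true --- and what the paper's proof tacitly uses when it asserts that the equalities cannot be satisfied in $\bbM_l$ even approximately --- is a convention about the language rather than a cleverer move: the unit is a constant symbol, so test formulas in the payoff may mention $1_m$ and $1_l$ themselves. That is why the sentence \eqref{eq.psi} may contain the symbol $1$, and it is the point of the footnote in the paper's proof: admitting $\|x^*x-(x^*x)^2\|_\OP$ and $\|x^*x+xx^*-1\|_\OP$ as test formulas makes this a one-move win for Challenger. Once that convention is in place, your argument closes up: playing $1_m$ forces $\|b_5-1_l\|_\OP\le\varepsilon$, your triangle inequalities give $\|b^*b-(b^*b)^2\|_\OP=O(\varepsilon)$ and $\|b^*b+bb^*-1_l\|_\OP=O(\varepsilon)$, and your final, dimension-free paragraph is correct --- indeed it is a clean explicit version of what the paper leaves as a diagonalisation exercise: $b^*b$ and $bb^*$ share a characteristic polynomial, hence are $O(\varepsilon)$-close to spectral projections of the same rank $k$; projections at operator-norm distance $<1$ have equal rank, so $k=l-k$, contradicting the oddness of $l$. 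So keep that last part, drop the plan to pin the spectrum of $b_6$ (it is hopeless, by the corner response above), and instead make explicit the convention that $1$ belongs to the language.
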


	\begin{proof}
		Since $m$ is even, some partial isometry $a_1\in \bbM_m$ satisfies $a_1^*a_1+a_1a_1^*=1_m$. 
		In the first six innings Challenger plays $a_1, a_2=a_1^*$, $a_3=a_1^*a_1$, $a_4=a_1a_1^*$,     $a_5=a_1^*a_1+a_1a_1^*$, and $a_6=1$\footnote{Clearly only $a_1$ matters. Had we defined the payoff by including $\|x^*x-(x^*x)^2)\|_\OP$ and $\|x^*x-xx^*-1\|_\OP$ among the test-formulas of \ref{P2}, this would be a `one move win' game  for the Challenger.  The payoff in an EF-game is sometimes defined by using a prescribed finite set of formulas; these games have numerous variations suitable for every occasion.} Then $a_5=1_m$ and since $l$ is odd, Duplicator cannot choose $b_1,\dots  b_5$ to satisfy the equalities in \ref{P2} exactly.  The trick is that they cannot be satisfied in $\bbM_l$ even approximately.

		If some  $b\in \bbM_m$ satisfies  $\|b^*b-(b^*b)^2\|_\OP<1/4$ and $\|bb^*+b^*b-1_m\|_\OP<1/4$, then some $v\in \bbM_m$ satisfies $v^*v=(v^*v)^2$,  and $v^*v+vv^*=1_m$. If you know how to diagonalise Hermitian matrices $v^*v$ and $vv^*$, then this is an exercise. 
		%
		Therefore winning the approximate game is as difficult for the Duplicator as winning the `on the nose' game, and they lose. 
	\end{proof}

	Just like the discrete case, the conclusion of this theorem can be reformulated as asserting that some first-order statement is evaluated differently at $\bbM_m$ and $\bbM_l$. The first-order statement associated with the proof of Theorem~\ref{T.evenodd} is the following\footnote{Evaluation of continuous first-order statements results in real numbers instead of truth values; see \cite{BYBHU}, \cite{hart2023an}.} 
	\begin{equation}\label{eq.psi}
		\psi:  \inf_{\|x\|_\OP\leq 1}\max(\|x^*x-(x^*x)^2\|_\OP,\|x^*x+xx^*-1\|_\OP). 
	\end{equation}
	The value of $\psi$ in $\bbM_n$ is 0 if $n$ is even and  $>10^{-10^{10}}$ if $n$ is odd.\footnote{There are better estimates but I am just a set theorist.}  
	A variant of the proof of Theorem~\ref{T.evenodd} relying on a modification of $\psi$  shows that for every~$k\geq 2$, there are $n$ and $\varepsilon>0$ such that if $k$ divides $m$ but it does not divide~$l$, then Challenger wins $\Gamma_\OP(\bbM_l,\bbM_m,n,\varepsilon)$.  
	
	In other words, the first-order theory of $\bbM_m$ `knows' what are the divisors of $m$. Actually, the first-order theory of $\bbM_m$ even `knows' the value of $m$, but  the point is that whether $m$ has a fixed $k$ as a divisor is determined by a single sentence, and the game associated with it has a fixed (not depending on $m$) number of innings and a fixed~$\varepsilon>0$. 
	
	How different is $\Gamma_\HS$ from $\Gamma_\OP$? Can the Challenger distinguish the parities of the dimensions in this game? Well, no. 
	
	\begin{lemma}\label{L.HS.m+1}
		For  all $n$  and $\varepsilon>0$ and all sufficiently large $m$, Duplicator wins $\Gamma_\HS(\bbM_m,\bbM_{m+1},n,\varepsilon)$. 
	\end{lemma}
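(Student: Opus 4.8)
The plan is to exploit the fact that, unlike the operator norm, the normalized Hilbert--Schmidt norm does not feel the difference between $\bbM_m$ and $\bbM_{m+1}$ once $m$ is large: $\bbM_m$ embeds into $\bbM_{m+1}$ as the upper-left corner, and after renormalizing this corner is \emph{almost} isometric. Concretely, let $P\in\bbM_{m+1}$ be the projection onto the first $m$ coordinates, let $\iota\colon\bbM_m\to\bbM_{m+1}$ send $a$ to the matrix with upper-left $m\times m$ block $a$ and zeros elsewhere, and let $\pi\colon\bbM_{m+1}\to\bbM_m$ be the compression sending $b$ to its upper-left $m\times m$ block. Then $\iota$ is a (non-unital) $*$-homomorphism, $\pi\circ\iota=\mathrm{id}$, and $\iota\circ\pi$ is the map $b\mapsto PbP$; moreover $\|\iota(a)\|_\OP=\|a\|_\OP$ and $\|\pi(b)\|_\OP\le\|b\|_\OP$, so both preserve the operator unit ball required by \ref{P1}. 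Duplicator's strategy is the obvious one: answer a move $a_j$ in $\bbM_m$ by $b_j=\iota(a_j)$, and a move $b_j$ in $\bbM_{m+1}$ by $a_j=\pi(b_j)$.

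First I would record the two quantitative facts that drive everything. Writing $\lambda_m=\sqrt{m/(m+1)}$, a one-line computation gives $\|\iota(a)\|_\HS=\lambda_m\|a\|_\HS$. And I claim that after every inning $\|b_j-\iota(a_j)\|_\HS\le\delta_m$ with $\delta_m=\sqrt{2/(m+1)}$: this is trivial (the difference is $0$) when Challenger played in $\bbM_m$, and when Challenger played $b_j$ in $\bbM_{m+1}$ it holds because $b_j-\iota(a_j)=b_j-Pb_jP$ is supported on the last row and column of $b_j$, whose Euclidean norms are each at most $\|b_j\|_\OP\le1$ by \ref{P1}. Both $\lambda_m\to1$ and $\delta_m\to0$ as $m\to\infty$.

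Next I would push these estimates through the four payoff expressions of \ref{P2}, using throughout the submultiplicativity $\|xy\|_\HS\le\|x\|_\OP\|y\|_\HS$ and $\|xy\|_\HS\le\|x\|_\HS\|y\|_\OP$, the adjoint-invariance of $\|\cdot\|_\HS$, and the bound $\|a_i\|_\HS\le\|a_i\|_\OP\le1$. For the singleton, linear, and adjoint tests this is immediate: each of $b_i$, $yb_i+zb_j-b_k$, $b_i^*-b_j$ is within $O(\delta_m)$ in $\|\cdot\|_\HS$ of $\iota$ applied to the corresponding expression in the $a$'s, whose HS-norm is in turn $\lambda_m$ times that of the expression, so each quantity in \ref{P2} is at most $C(|1-\lambda_m|+\delta_m)$ for an absolute constant $C$. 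The only step with any content is the multiplicative test, where I would write
\[
b_ib_j-\iota(a_i)\iota(a_j)=(b_i-\iota(a_i))b_j+\iota(a_i)(b_j-\iota(a_j))
\]
and bound each summand by $\delta_m$ using $\|b_j\|_\OP\le1$ and $\|\iota(a_i)\|_\OP=\|a_i\|_\OP\le1$; since $\iota$ is multiplicative, $\iota(a_i)\iota(a_j)=\iota(a_ia_j)$, so the estimate closes as before.

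The main (and essentially the only) obstacle is this multiplicative term: a product of two HS-close matrices need not be HS-close unless one controls an operator-norm factor, and this is exactly where condition \ref{P1} is indispensable, as it lets an $\|\cdot\|_\HS$-error survive multiplication by $b_j$ or by $\iota(a_i)$. Once this is in hand, every payoff quantity is bounded by $C(|1-\lambda_m|+\delta_m)$, a bound that is uniform in $n$ and in all the players' choices; choosing $m$ so large that $C(|1-\lambda_m|+\delta_m)<\varepsilon$ makes Duplicator win. (The no-repetition rule is harmless and may be ignored, e.g. by perturbing a repeated response by an arbitrarily small amount.)
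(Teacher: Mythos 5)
Your proposal is correct and is essentially the paper's own argument: the paper's Duplicator uses the same corner embedding and compression maps (its $\alpha_m,\beta_m$ are your $\iota,\pi$), with the same key estimate $\|b-\alpha_m(\beta_m(b))\|_\HS<\sqrt{2/m}$ obtained from the operator-norm bound on the last row and column. The only difference is one of detail: the paper asserts without computation that the payoff values change by less than $\varepsilon$ for large $m$, whereas you carry out that verification explicitly (the renormalization factor $\lambda_m$ and the operator-norm-controlled estimate for the multiplicative test), so your write-up is a fleshed-out version of the same strategy.
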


	\begin{proof} Let $\alpha_m\colon \bbM_m\to \bbM_{m+1}$ be defined by  adding a zero $m+1$-st column and a zero $m+1$-st row to an $m\times m$-matrix and let $\beta_m\colon \bbM_{m+1}\to \bbM_m$ be defined by removing the $m+1$-st column and the $m+1$-st row from given $(m+1)\times (m+1)$ matrix. If $a\in \bbM_{m+1}$ and $\|a\|_\OP\leq 1$ we then have that  $\|a-\alpha(\beta(a))\|_\HS<\sqrt {2/m}$, because the sum of the squares in the entries in each row and each column is at most 1. 
		We can now describe a strategy for the Duplicator: 
		In the $j$-th inning play $\alpha(a_j)$ or $\beta(b_j)$, depending on what the Challenger played. For a large enough $m$, this assures that after $n$ moves the values in the payoff set hadn't changed by more than $\varepsilon$, resulting in a win for Duplicator.  
	\end{proof}
	
	This implies that the relations  $v^*v=(v^*v)^2$ and  $v^*v+vv^*=1$  are (even when taken together) not weakly stable with respect to $\|\cdot\|_\HS$. 
	A similar observation applies to divisibility of the dimension by any given~$k\geq 3$. What, if any, properties of large matrix algebras can be detected by their $\|\cdot\|_\HS$-first order theories? Nobody knows.

	\begin{question} \label{Q1} Given $n$ and $\varepsilon>0$, is there $\bar m$ large enough so that Duplicator wins $\Gamma_{\textrm{HS}}(\bbM_l,\bbM_m,n,\varepsilon)$ whenever $\min(m,k)\geq \bar m$? 
	\end{question}
	
	A positive answer to this question would be equivalent to asserting that  for every first-order sentence $\varphi$ the limit $\lim_{m\to \infty}\varphi^{\bbM_m}$ exists. 
	
	\section{Normalizers}
	
	\begin{quote}
		If you can't solve a problem, then there is an easier problem you can solve: find it.
		
		\hfill George P\' olya
	\end{quote} 
	Let's change the game by requiring the players to play only permutation matrices. A \emph{permutation matrix} is a matrix all of whose entries  are equal to 0 or to 1 that has exactly one 1 in each row and each column. The linear transformation associated with such matrix has the effect of permuting the basis vectors. All permutation matrices are unitary and  they are closed under products. Thus they  form a multiplicative group in~$\bbM_m$, and this group is isomorphic to $\cS_m$, the group of permutations of an $m$-element set. 
	On $\cS_m$ we have the \emph{Hamming metric} defined by  (let $\Delta(\pi,\sigma)$ be the cardinality of the set~$\{j: \pi(j)\neq \sigma(j)\}$)
	\[
	d_H(\pi,\sigma)=\frac {\Delta(\pi,\sigma)}m. 
	\]
	Let $\Gamma(\cS_m,\cS_l,n,\varepsilon)$ be defined as follows. 
	\begin{enumerate}[label = (N\arabic*), itemindent=10pt]
		\item The players alternate choosing elements of $\cS_m$ and $\cS_l$, as before.  
		\item \label{N2} Duplicator wins  if and only if 
		$|d_H(a_i a_j,a_k) -d_H(b_ib_j,b_k)|<\varepsilon$  for all $i,j$, and $k$. 
	\end{enumerate}
	This game is more similar to $\Gamma_\HS(\bbM_m,\bbM_l,n,\varepsilon)$ than it may appear. 
	First, since we are comparing groups it is only natural to ignore the addition and multiplication by scalars. 
	Second, the distance $d_H$  is similar to the metric that one gets by identifying $\pi$ and $\sigma$ with permutation matrices and computing the $\HS$-norm of the difference.\footnote{Third, from the unitary group of an operator algebra such as $\bbM_m$ one can often recover the algebra, but that is another story.}  The latter is equal to $\sqrt {2\Delta(\pi,\sigma)/n}$. However, with fixed $l$, $m$, and $n$, for all small enough $\varepsilon>0$ replacing \ref{N2} with the following would not change the outcome of the game. 
	\begin{enumerate}[label = (N\arabic*), resume, itemindent=10pt]
		\item \label{N3} Duplicator wins  if and only if
		$|\|a_i a_j-a_k\|_\HS -\|b_ib_j-b_k\|_\HS|<\varepsilon$  for all $i,j$, and $k$. 
	\end{enumerate}

	We can now answer the analog of Question~\ref{Q1}.

	\begin{theorem} There are $n$ and $\varepsilon>0$ such that there are arbitrarily large $m$ and~$l$ for which the Challenger wins $\Gamma(\cS_m,\cS_l,n,\varepsilon)$. 
	\end{theorem}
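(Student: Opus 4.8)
The plan is to strip the game down to its tracial content and then reduce the theorem to a non-convergence statement. Since the Hamming metric $d_H$ is bi-invariant and $d_H(\pi,\mathrm{id})=1-\tau_m(\pi)$ for the normalized trace $\tau_m(\pi)=\mathrm{fix}(\pi)/m$, every quantity tested in \ref{N2} has the form $d_H(a_ia_j,a_k)=1-\tau_m(a_k^{-1}a_ia_j)$. Thus a play of the game is nothing but a choice, in each group, of the values $\tau_m(w)$ for $w$ ranging over a fixed finite set $W=W(n)$ of short words; playing the identity as one of the moves lets the Challenger also read off the traces of single words. Writing $R_m\subseteq[0,1]^W$ for the closure of the set of profiles $(\tau_m(w))_{w\in W}$ realizable in $\cS_m$, winning $\Gamma(\cS_m,\cS_l,n,\varepsilon)$ for the Challenger amounts to the existence of a profile realizable in one of the two groups that is more than $\varepsilon$ from every profile realizable in the other. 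Hence the theorem is equivalent to the assertion that the regions $R_m$ do \emph{not} form a Cauchy sequence: there is a fixed $\eta>0$ and arbitrarily large $m,l$ with $R_m$ and $R_l$ at Hausdorff distance $>\eta$. Given such an $\eta$, one takes any $\varepsilon<\eta$ and $n=|W|$.

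Next I would isolate where the gap can come from, since the obvious candidates are dead ends. Micro-arithmetic features such as parity or divisibility perturb the traces only by $O(1/m)$ — precisely the mechanism behind Lemma~\ref{L.HS.m+1} — so they make the $R_m$ converge and cannot separate the regions; likewise the constraints that hold in \emph{every} permutation representation (positive-definiteness of $\tau_m$, and the inclusion–exclusion bound $\tau_m(gh)\ge\tau_m(g)+\tau_m(h)-1$ coming from $\mathrm{Fix}(g)\cap\mathrm{Fix}(h)\subseteq\mathrm{Fix}(gh)$) are dimension-independent and drop out in the limit. The one feature that survives approximation \emph{exactly} is conjugacy: conjugation preserves fixed-point counts, so an approximate normalizing relation $bab^{-1}\approx a^{r}$ (within Hamming defect $\delta$) forces $|\tau_m(a)-\tau_m(a^{r})|\le\delta$ rigidly. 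This is why I would build the Challenger's configuration around normalizers: play generators of an abelian subgroup together with elements normalizing it so that, along a primitive-root chain $a\mapsto a^{r}\mapsto a^{r^2}\mapsto\cdots$, all nontrivial powers of $a$ are forced to have nearly equal traces, pinning the realizable common value $\tau_m(a^{j})$ to a dimension-sensitive set and, together with a few further relations, confining the whole profile to a region that persists in depending on the dimension in the limit.

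The hard part is exactly this last step: turning the robust but purely \emph{congruential} rigidity produced by the normalizer relations into a genuine macroscopic gap. Conjugacy-homogeneity by itself only yields conditions of the shape $\mathrm{fix}(a)\equiv m \pmod p$, which — because one may always add multiples of $p$ to a fixed-point count — permit $\tau_m(a)$ to approach any value in $[0,1]$, so they again wash out and leave the $R_m$ Cauchy. To defeat this I would choose the relations so that their \emph{approximate} solutions are unstable in a dimension-sensitive way: the system should be satisfiable within $\delta$ in $\cS_m$ for $m$ in one explicit family, while every $\delta$-approximate solution in $\cS_l$, for $l$ in a complementary family, yields a profile bounded away by a fixed $\eta$. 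Proving such a uniform, dimension-separating bound — an inverse/stability theorem at bounded complexity, with the defect $\delta$ and the number of relations held fixed while $m,l\to\infty$ — is the crux and the main obstacle; everything else is the bookkeeping of the first paragraph. Organizationally it is cleanest to phrase the target through metric ultraproducts: the theorem says that the universal sofic groups $\prod_{\cU}\cS_{m_i}$ obtained along two suitably chosen index sequences are separated by a single bounded-complexity continuous sentence, and I would aim to write that separating sentence down explicitly from the normalizer configuration above.
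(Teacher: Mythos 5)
Your proposal is a reduction plus a research plan, not a proof, and you say so yourself: the ``inverse/stability theorem at bounded complexity'' that would separate the dimensions is flagged as ``the crux and the main obstacle,'' and nothing in the proposal supplies it. This is not a routine missing lemma --- it is the entire mathematical content of the statement. Indeed, the paper itself does not prove this theorem; it derives it from \cite[Theorem~5.5]{alekseev2024on}, and the ingredients it lists for that result (Ulam-stability of approximate homomorphisms, expander graphs, sofic groups, and a comparison of first-order theories of alternating groups of order $p^4-1$ for primes $p\geq 7$) are precisely the kind of machinery your missing step would have to reproduce. Your instincts are good and point in the direction the actual argument goes: stability of approximately satisfied relations is the right rigidity mechanism, and your observation that purely congruential constraints ``wash out'' (the mechanism of Lemma~\ref{L.HS.m+1}) correctly rules out the naive candidates. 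But identifying where the proof must go is not the same as going there, and what remains unproved is a deep, recent theorem rather than bookkeeping.

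There is also a substantive error in your first paragraph: the claim that Challenger winning $\Gamma(\cS_m,\cS_l,n,\varepsilon)$ ``amounts to'' the profile regions $R_m$, $R_l$ being far apart is false as an equivalence. The EF game captures quantifier alternation: the Challenger may win by adaptively switching sides --- exactly as in the paper's graph example, where Challenger plays two vertices in $G_2$, sees Duplicator's response in $G_1$, and then plays the third vertex in $G_1$ --- even when every tuple realizable in one group has an approximately matching tuple in the other. Only the implication you actually use (regions far apart $\Rightarrow$ Challenger wins, by playing the witnessing tuple in one group) is valid. This is not merely a pedantic point, because it means you have silently replaced the theorem by the strictly stronger assertion that the \emph{existential} theories separate. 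Padding and amplification arguments (direct sums of actions give $R_m\subseteq R_l$ exactly when $m\mid l$ and approximately when $l\gg m$) show these regions enjoy strong approximate monotonicity, so the stronger assertion needs genuine justification; by discarding quantifier alternation at the outset you may have thrown away exactly the expressive power that the cited proof exploits.
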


	This is  a consequence of \cite[Theorem~5.5]{alekseev2024on}, although this is not at all obvious from what we have seen so far. 
	The proof involves some fascinating ideas that we have no time to discuss such as sofic groups (\cite{Pe:Hyperlinear}),  expander graphs, Ulam-stability of approximate homomorphisms, comparison of first-order theories of alternating groups of order $p^4-1$ for primes $p\geq 7$, and something that awaits us in the final section. It is the time to  take the red pill,\footnote{After reading this far you may have an impression that it was written by a fan of the Matrix movies. This is not true at all; I just had to grab your attention somehow. Sorry.}  step out of the matrix, and take a  bird's eye view.

	\section{The really, really, really, big picture}

	I have lied to you. Question~\ref{Q1} is just a shadow of the following question asked by Sorin Popa, and negative answer to Question~\ref{Q1} implies a negative answer to Question~\ref{QII1}.\footnote{Is the converse true? Who knows. Read on.}  
	
	\begin{question}
		\label{QII1} Are all tracial ultraproducts of matrix algebras $\prod^\cU\bbM_n$ isomorphic? 
	\end{question}
	
	It seems that I have a lot of explaining to do (pun intended). 
	A nonprincipal  ultrafilter on $\bbN$ (all ultrafilters in this snapshot are nonprincipal and on $\bbN$) is a family of subsets $\cU$ of $\bbN$ that satisfies the following conditions. 
	\begin{enumerate}[label = ($\cU$\arabic*), itemindent=10pt]
		\item For all $A,B$ in $\cU$, $A\cap B\in \cU$. 
		\item For all $A\in \cU$ and $A\subseteq C\subseteq \bbN$, $C\in \cU$. 
		\item For every $A\subseteq \bbN$ exactly one of $A$ or $\bbN\setminus A$ belongs to $\cU$. 
		\item No finite set belongs to $\cU$. 
	\end{enumerate}
	The first two conditions assert that $\cU$ is a filter, while the third and fourth make it `ultra' and nonprincipal, respectively. 
	Such an object cannot be constructed without appealing to a fragment of the Axiom of Choice. While the latter has a number of fairly perverse consequences, the vast majority of contemporary mathematicians accept it as true. 
	Every ultrafilter has remarkable properties. For example, if $(x_n)$ is a bounded sequence of scalars, then there is a unique scalar~$x$ such that for every $\varepsilon>0$ the set $\{n: |x-x_n|<\varepsilon\}$ belongs to $\cU$. In this case we write $\lim_{n\to \cU} x_n=x$  and say that the sequence $(x_n)$ converges to $x$ as $m\to \cU$,  or that it converges to $x$ along $\cU$. This readily implies that for every~$m$ and $\|\cdot\|_\OP$-bounded sequence $a_n\in \bbM_m$ there is a unique $a\in \bbM_m$ such that $\lim_{n\to \cU} \|a_m-a\|_\HS=\lim_{n\to \cU} \|a_m-a\|_\OP=0$. More generally, every sequence $(x_n)$ in a compact metric (or any compact Hausdorff) space has a unique limit along $\cU$,  denoted $\lim_{n\to \cU} x_n$. 
	
	`Limits' along an ultrafilter when compactness is lost are more interesting than limits in a compact space beyond comparison.
	\subsection{Ultraproducts}
	Given graphs $G_n=(V_n,E_n)$, for $n\in \bbN$,  on $\prod_n V_n$ consider the relations
	\begin{align*}
		(a_n)\sim_{\cU} (b_n)&\Leftrightarrow \{n: a_n=b_n\}\in \cU,\\
		(a_n)\E_{\cU} (b_n)&\Leftrightarrow \{n: a_n\E_n b_n\}\in \cU.
	\end{align*}
	Then $\sim_{\cU}$ is an equivalence relation,  and  if $(a_n)\sim_{\cU} (a_n')$ and $(b_n)\sim_{\cU} (b_n')$ then $(a_n)\E_{\cU}(b_n)$  if and only $(a_n')\E_{\cU} (b_n')$. 
	The quotient structure $(\prod_n V_n/\sim_{\cU}, \E_{\cU})$ is the \emph{ultraproduct} $\prod_\cU G_n$. For every first-order sentence $\varphi$ about graphs, 
	$\varphi$ holds in $\prod_\cU G_n$ if and only if the set $\{n: \varphi$ holds in $G_n\}$ belongs to $\cU$.  This is \L o\'s's Theorem, rightfully known as the Fundamental Theorem of Ultraproducts. Theorem~\ref{T.EF-graph} then implies that the first-order theory of $\prod_\cU C_m$ does not depend on the choice of $\cU$, the~0-1 law for random graphs implies the analogous statement for (sufficiently) random graphs, and by Theorem~\ref{T.graphs-convergence} every sequence of graphs has a further subsequence with this property. 
	All of this applies to other discrete mathematical structures such as linear orderings, groups, rings\dots, but I owe you an explanation of Question~\ref{QII1}. 
	
	Back on track,  let\footnote{The reason why we consider only operator norm-bounded sequences (and for the similar requirement in \ref{P1}) is that this assures the resulting algebra is isomorphic to an algebra of bounded operators on a Hilbert space. This algebra even happens to be a II$_1$ factor.}
		$\bbM=\{(a_m): a_m\in \bbM_m\text{ and }\sup_m \|a_m\|_\OP<\infty\}$ and
	\begin{equation*}
		\textstyle c_\cU(\bbM)=\textstyle\{(a_m)\in \prod_m\bbM_m : \lim_{m\to \cU} \|a\|_\HS=0\}. 
	\end{equation*}
	Both structures are closed under all algebraic operations defined coordinatewise, and $c_\cU(\bbM)$ is an \emph{ideal} in $\bbM$: for $a\in \bbM$ and $b\in c_\cU(\bbM)$ we have $ab\in c_\cU(\bbM)$ and $ba\in c_\cU(\bbM)$. The \emph{tracial ultraproduct} of the $\bbM_m$'s is the quotient
	\[
	\textstyle\prod^\cU\bbM_m=\bbM/c_\cU(\bbM). 
	\]
	By the continuous analog of \L o\'s's Theorem,  the value of every first-order sentence in the product is the limit of its values in~$\bbM_m$ as $m\to \cU$.
	It implies that the answer to Question~\ref{Q1}  is positive if and only if the first-order theory of $\prod^\cU\bbM_m$ does not depend on the choice of $\cU$.

	By \cite{FaHaSh:Model2}, the negation of Cantor's Continuum Hypothesis (CH) implies a negative answer to Question~\ref{QII1}. Instead of offering a detour on CH this late in the paper,~I recommend that the next time you meet a set theorist  you ask whether CH is true. You may hear answers that are about as opinionated as they are different.\footnote{Also see the Paul Erd\" os anecdote in \cite[p. 251]{Fa:STCstar}.  This book  also happens to be my favourite reference for  ultraproducts, countable saturation, and whatnot \wInnocey.}
	
	A remarkable property of all ultraproducts associated with nonprincipal utrafilters on $\bbN$ is their \emph{countable saturation} (see \cite{taoultraproducts} for an enlightening exposition). All that we need to know is a consequence of this property, that if $\prod^\cU \bbM_n$ and $\prod^\cV\bbM_n$ share the first-order theory \emph{and} the CH holds, then they are isomorphic.\footnote{This is a part of the evidence that the first-order logic lies in the `Goldilocks expressiveness zone'. The nonisomorphic ultraproducts obtained when CH fails can be distinguished only by considering very large invariants coming from Saharon Shelah's non-structure theory.}
	Thus CH implies that Question~\ref{Q1} and Question~\ref{QII1} have the same answer. The answer to Question~\ref{Q1} does not depend on CH or other standard additional set-theoretic axioms; we just don't know what it is. 
	
	By replacing $\|\cdot\|_\HS$ with $\|\cdot\|_\OP$ in the definition of $c_\cU(\bbM)$, one obtains the (operator) norm ultraproduct of $\prod_\cU \bbM_m$.  Theorem~\ref{T.evenodd} and the paragraph following it imply that for every $k\geq 2$ the first-order theory of $\prod_\cU\bbM_m$ detects whether the set~$\{m: k|m\}$ belongs to~$\cU$, and the operator norm analog of Question~\ref{QII1} has a negative answer regardless of whether CH is true or not. I don't know whether the theory of $\prod_\cU\bbM_m$ can detect any other information about $\cU$. 
	
	\subsection*{Acknowledgements} I am indebted to Dima Shlyakhtenko for suggesting that I write something like this and for helpful remarks on the first draft of this paper,  to Diego Bejarano and Andrea Vaccaro for noticing a few typos, and to Vadim Alekseev for two insightful suggestions that greatly improved the exposition.

\bibliographystyle{plain}
\bibliography{games-in-the-matrix}

\begin{thebibliography}{10}

\bibitem{alekseev2024on}
V.~Alekseev and A.~Thom.
\newblock On non-isomorphic universal sofic groups.
\newblock Preprint, {arXiv}:2406.06741 [math.{GR}] (2024), 2024.

\bibitem{BYBHU}
I.~Ben~Yaacov, A.~Berenstein, C.W. Henson, and A.~Usvyatsov.
\newblock Model theory for metric structures.
\newblock In Z.~Chatzidakis et~al., editors, {\em Model Theory with
  Applications to Algebra and Analysis, Vol. II}, number 350 in London Math.
  Soc. Lecture Notes Series, pages 315--427. London Math. Soc., 2008.

\bibitem{Fa:STCstar}
I.~Farah.
\newblock {\em Combinatorial Set Theory and \cstar-algebras}.
\newblock Springer Monographs in Mathematics. Springer, second edition, 2026.

\bibitem{farah2026games}
I.~Farah.
\newblock Games in the matrix.
\newblock {\em Snapshots of modern mathematics from Oberwolfach}, to apear.

\bibitem{FaHaSh:Model2}
I.~Farah, B.~Hart, and D.~Sherman.
\newblock Model theory of operator algebras {II}: Model theory.
\newblock {\em Israel J. Math.}, 201:477--505, 2014.

\bibitem{hart2023an}
B.~Hart.
\newblock An introduction to continuous model theory.
\newblock In {\em Model theory of operator algebras}, pages 83--131. Berlin: De
  Gruyter, 2023.

\bibitem{Hodg:Model}
W.~Hodges.
\newblock {\em Model theory}, volume~42 of {\em Encyclopedia of Mathematics and
  its Applications}.
\newblock Cambridge university press, 1993.

\bibitem{Pe:Hyperlinear}
V.~Pestov.
\newblock Hyperlinear and sofic groups: a brief guide.
\newblock {\em Bull. Symbolic Logic}, 14:449--480, 2008.

\bibitem{shelah1988zero}
S.~Shelah and J.~Spencer.
\newblock Zero-one laws for sparse random graphs.
\newblock {\em J. Am. Math. Soc.}, 1(1):97--115, 1988.

\bibitem{taoultraproducts}
T.~Tao.
\newblock Ultraproducts as a bridge between hard analysis and soft analysis.
\newblock https://terrytao.wordpress.com/tag/countable-saturation/, 2011.

\end{thebibliography}

\newpage

\appendix 

\section{Graphs for a game} \label{game1}
Two graphs for a three-inning game. Can you find a winning strategy for Challenger or Duplicator?

\begin{center}
	\begin{tikzpicture}[every node/.style={circle, draw=black, fill=black, inner sep=0pt,minimum size=7pt, line width=0.8pt},thick]
		
		\def\R{3.5} 	
		
		\foreach \i in {0,...,5} {		
			\pgfmathsetmacro{\angle}{90 - \i * 60}
			\node (v\i) at (\angle:\R) {};
		}

		\foreach \i in {0,...,4} {
			\pgfmathtruncatemacro{\j}{\i+1}
			\draw (v\i)--(v\j);
		}
		\draw (v5)--(v0);
	\end{tikzpicture}
	
	\vspace{1.5cm}
	
	\begin{tikzpicture}[every node/.style={circle, draw=black, fill=black, inner sep=0pt,minimum size=7pt, line width=0.8pt},thick]
		
		\foreach \i in {0,1,2} {		
			\pgfmathsetmacro{\y}{(\i - 1) * 2.5}
			\node (l\i) at (-3, \y) {};
			\node (r\i) at ( 3, \y) {};
		}
		
		\foreach \i in {0,1,2} {		
			\foreach \j in {0,1,2} {
				\draw (l\i)--(r\j);
			}
		}
	\end{tikzpicture}
\end{center}

\newpage \section{Graphs for another game} \label{game2}
Two graphs to play games on. How many innings are necessary for a winning strategy for Challenger to exist?

\vspace{0.5cm}
\centering
\begin{tikzpicture}[every node/.style={circle, draw=black, fill=black, inner sep=0pt,minimum size=7pt, line width=0.8pt},thick]
	
	\def\R{3.5}   
	\def\N{8}     
	
	\foreach \i in {0,...,7} {
		\pgfmathsetmacro{\angle}{90 - \i * 360/\N}
		\node (v\i) at (\angle:\R) {};
	}
	
	\foreach \i in {0,...,6} {
		\pgfmathtruncatemacro{\j}{\i+1}
		\draw (v\i) -- (v\j);
	}
	\draw (v7) -- (v0);
	
	\foreach \i in {0,...,3} {
		\pgfmathtruncatemacro{\j}{\i+4}
		\draw (v\i) -- (v\j);
	}
	
\end{tikzpicture}

\vspace{1.5cm}

\begin{tikzpicture}[every node/.style={circle, draw=black, fill=black, inner sep=0pt,minimum size=7pt, line width=0.8pt},thick]
	
	\def\Ro{3.5}   
	\def\Ri{1.8}   
	
	\foreach \i in {0,...,4} {
		\pgfmathsetmacro{\angle}{90 - \i * 72}
		\node (u\i) at (\angle:\Ro) {};
	}
	
	\foreach \i in {0,...,4} {
		\pgfmathsetmacro{\angle}{90 - \i * 72}
		\node (w\i) at (\angle:\Ri) {};
	}
	
	\foreach \i in {0,...,3} {
		\pgfmathtruncatemacro{\j}{\i+1}
		\draw (u\i) -- (u\j);
	}
	\draw (u4) -- (u0);
	
	\foreach \i in {0,...,4} {
		\pgfmathtruncatemacro{\j}{mod(\i+2,5)}
		\draw (w\i) -- (w\j);
	}
	
	\foreach \i in {0,...,4} {
		\draw (u\i) -- (w\i);
	}

\end{tikzpicture}

\end{document}